\documentclass[12pt,a4paper,psamsfonts]{amsart}
\usepackage{amssymb,amscd,amsxtra,calc}
\usepackage{cmmib57}

\setlength{\topmargin}{0cm}
\setlength{\oddsidemargin}{0cm}
\setlength{\evensidemargin}{0cm}
\setlength{\marginparwidth}{0cm}
\setlength{\marginparsep}{0cm}

\setlength{\textheight}{\paperheight - 2in -35pt}
\setlength{\textwidth}{\paperwidth - 2in}
\setlength{\headheight}{12.5pt}
\setlength{\headsep}{25pt}
\setlength{\footskip}{30pt}

\pagestyle{headings}

\theoremstyle{plain}
    \newtheorem{thm}{Theorem}[section]

    \newtheorem{theorem}[thm]{Theorem}

\theoremstyle{definition}

\theoremstyle{remark}

\title[Backtracking Gradient Descent on Banach spaces]{Some convergent results for Backtracking Gradient Descent method on Banach spaces}

\author{Tuyen Trung Truong}
\address{Matematikk Institut, Universitetet i Oslo, Blindern, 0851 Oslo, Norway}
\email{tuyentt@math.uio.no}
\thanks{}
\date{\today}
\begin{document}
\begin{abstract}
In a recent paper, the author showed that a version of Backtracking gradient descent method (called Backtracking gradient descent - New) can effectively avoid saddle points, when applied to functions on Euclidean spaces whose gradients are locally Lipschitz continuous. For example, this result is valid for $C^2$ functions. 

In this paper, we extend the mentioned result to infinite dimensional Banach space. Our result concerns the following condition: 

{\bf Condition C.} Let $X$ be a Banach space. A $C^1$ function $f:X\rightarrow \mathbb{R}$ satisfies Condition C if whenever $\{x_n\}$ weakly converges to $x$ and $\lim _{n\rightarrow\infty}||\nabla f(x_n)||=0$, then $\nabla f(x)=0$.   

Condition C is satisfied when $X$ is finite dimensional, or when  $f$ is quadratic or convex. Another interesting case for Condition C in infinite dimensions is when $\nabla f$ is of class $(S)_+$ defined by Browder in Nonlinear PDE.

We assume that there is given a canonical isomorphism between $X$ and its dual $X^*$, for example when $X$ is a Hilbert space. A main result in the paper, which can be concisely formulated, is as follows: 

{\bf Theorem.} Let $X$ be a reflexive Banach space and $f:X\rightarrow \mathbb{R}$ be a $C^2$ function which satisfies Condition C. Moreover, we assume that for every bounded set $S\subset X$, then $\sup _{x\in S}||\nabla ^2f(x)||<\infty$. We choose a point $x_0\in X$ and construct by the Local Backtracking GD procedure (which depends on $3$ hyper-parameters $\alpha ,\beta ,\delta _0$, see later for details) the sequence $x_{n+1}=x_n-\delta (x_n)\nabla f(x_n)$. Then we have: 

1) Every cluster point of $\{x_n\}$, in the {\bf weak} topology, is a critical point of $f$. 

2) Either $\lim _{n\rightarrow\infty}f(x_n)=-\infty$ or $\lim _{n\rightarrow\infty}||x_{n+1}-x_n||=0$. 

3) Here we work with the weak topology. Let $\mathcal{C}$ be the set of critical points of $f$. Assume that $\mathcal{C}$ has a bounded component $A$. Let $\mathcal{B}$ be the set of cluster points of $\{x_n\}$. If $\mathcal{B}\cap A\not= \emptyset$, then $\mathcal{B}\subset A$ and $\mathcal{B}$ is connected.     

4) Assume that $f$ has at most countably many saddle points. Then for generic choices of $\alpha ,\beta ,\delta _0$ and the initial point $x_0$, if the sequence $\{x_n\}$ converges - in the {\bf weak} topology, then the limit point cannot be a saddle point. 

5) Assume that $X$ is separable. Then for generic choices of $\alpha ,\beta ,\delta _0$ and the initial point $x_0$, if the sequence $\{x_n\}$ converges - in the {\bf weak} topology, then the limit point cannot be a saddle point. 

Parts 1 to 3 are valid for all Banach space and other versions of Backtracking GD. The exceptional sets for $x_0$ in parts 4) and 5) are "shy".  We also discuss how to treat 4) and 5) using normalized duality mappings. 
\end{abstract}
\maketitle

Finding extrema of a function is an important task. Gradient Descent methods (GD) are a popular method used for this purpose, and indeed are best theoretically guaranteed among all iterative methods so far. There has been a lot knowledge accumulated - since 1847 - on this problem in the finite dimensional setting, in particular on Euclidean spaces.  For example, recently  in \cite{truong-nguyen, mbtoptimizer}, the author and collaborator proved convergence of Backtracking GD under very general conditions (valid for all Morse functions) and show that some of its modifications (Two-way Backtracking GD and Hybrid Backtracking GD) can be implemented effectively in Deep Neural Networks (DNN), with strong theoretical justifications, non-expensive, while being automatic and having very good performance compared to state-of-the-art algorithms such as Momentum, NAG, Adam, Adagrad, Adadelta and RMSProp. For maps $f$ whose gradient $\nabla f$ is Lipschitz continuous with Lipschitz constant $L$ (this class of functions is usually denoted by $C^{1,1}_L$), it has been shown \cite{lee-simchowitz-jordan-recht, panageas-piliouras} that Standard GD, with learning rate in the order of $1/L$, avoids generalised saddle points. Here we recall that a critical point $z$ is saddle if $f$ is $C^2$ near $z$ and the Hessian $\nabla ^2f(z)$ has both positive and negative eigenvalues. More generally, $z$ is a generalised saddle point if $\nabla ^2f(z)$ has at least one negative eigenvalue. This result on avoidance of generalised saddle points has  been extended in \cite{truong} to more general functions for the following version of Backtracking GD (called Backtracking GD-New in that paper, but here we change to another name which seems more suitable)

{\bf Local Backtracking GD:} Let $f:\mathbb{R}^k\rightarrow \mathbb{R}$ be a $C^1$ function. Assume that there are continuous functions $r,L:~\mathbb{R}^k\rightarrow (0,\infty)$ such that for all $x\in \mathbb{R}^k$, the gradient $\nabla f$ is Lipschitz continuous on $B(x,r(x))$ with Lipschitz constant $L(x)$. Fix constants $0<\alpha ,\beta <1$ and $\delta _0>0$. For each $x\in \mathbb{R}^k$, we define the number $\hat{\delta}(x)$ to be the largest number $\delta$ among $\{\beta ^n\delta _0:~n=0,1,2,\ldots \}$ which satisfy the following 2 conditions
\begin{eqnarray*}
\delta &<&\alpha /L(x),\\
\delta ||\nabla f(x)||&<&r(x).
\end{eqnarray*}

Note that when $x$ is very close to a critical point, then the second condition above is not needed. We can use Local Backtracking GD for example if $f\in C^{1,1}_L$ or if $f\in C^2$. In fact, if $f\in C^2$, then we can use for example $r(x)=1+||x||$ and $L(x)=\sup _{z\in B(x,r(x))}||\nabla ^2f(z)||$.  

Backtracking GD also allows learning rates $\delta _n$ to be unbounded from above \cite{truong2}. We can combine the many different versions of Backtracking GD, and still can prove rigorous theoretical results for such combinations. The literature on Gradient Descent methods in finite dimensional spaces is very abundant, we refer the readers to the above papers and references therein for more detail.

In this paper for simplicity we use only the Local Backtracking GD. Here we aim to extend the above mentioned results to infinite dimensional Banach spaces. For a comprehensive reference on Banach spaces, we refer the readers to \cite{rudin}. We recall that a Banach space $X$ is a vector space, together with a norm $||.||$, and is complete with respect to the norm. The topology induced by $||.||$ is called the strong topology. The dual space $X^*$, consisting of bounded linear maps $h:X\rightarrow \mathbb{R}$, is also a Banach space, with norm $||h||=\sup _{x\in X:~||x||\leq 1}|h(x)|$.   

There are many pathologies when one works with infinite dimensional Banach spaces. For example, a bounded sequence $\{x_n\}$ may not have any subsequence converging in the (strong) topology, unlike the case of finite dimensions where we have Bolzano-Weierstrass property. For to better deal with this, we need to work with another topology, called weak topology. To distinguish with the usual (strong) topology, we will use a prefix "w-" when speaking about weak topology. Here, a sequence $\{x_n\}$ w-converges to $x$ if for every $h\in X^*$ we have $\lim _{n\rightarrow\infty}h(x_n)=h(x)$. Note that strong and weak topologies coincide only if $X$ is finite dimensional. If we want to preserve the good Bolzano-Weierstrass property, then we need to require that $X$ is reflexive, in that $X$ is isomorphic to its double dual $X^{**}$. In this case, it follows by Kakutani's theorem that any bounded sequence $\{x_n\}$ has a subsequence which is w-convergent. 

Finding critical points of functions in infinite Banach spaces has many applications, among them is finding solutions to PDE. For example, a fundamental technique in PDE is to use a weak formulation. For example, $u$ is a $C^2$ solution to Poisson's equation $-\nabla ^2u=g$ on a domain $\Omega \subset \mathbb{R}^d$ with $u|_{\partial \Omega }=0$, if and only if for all smooth functions $v$ with the same boundary condition we have
\begin{eqnarray*}
-\int _{\Omega}(\nabla ^2u)vdx = \int _{\Omega}gv.
\end{eqnarray*}
 Now, by using integration by parts, one reduce the above to 
 \begin{eqnarray*}
 \int _{\Omega}\nabla u\nabla vdx-\int _{\Omega}gvdx=0.
 \end{eqnarray*}
 One can then construct a Sobolev's space of measurable functions $H^{1}_0(\Omega)$ of functions with weak zero boundary conditions, and define a norm
 \begin{eqnarray*}
 ||u||_*^2=\int _{\Omega}||\nabla u(x)||^2dx.
 \end{eqnarray*}
It turns out that $H^{1}_0(\Omega)$ is a Banach space, and is indeed a Hilbert space, and hence is reflexive. To find a solution to the original Poisson's equation, one proceeds in  two steps. In Step 1, one finds a weak solution $u$ in $H^{1}_0(\Omega)$, which is in general only a measurable function. In Step 2, one uses regularity theory to show that solutions from Step 1 are indeed in $C^2$ and hence are usual solutions. Now, we can relate Step 1 to finding critical points in the following way. We define a function $f:H^{1}_0(\Omega )\rightarrow \mathbb{R}$ by the formula
\begin{eqnarray*}
f(u)=\frac{1}{2}\int _{\Omega}||\nabla u(x)||^2dx-\int _{\Omega}gu.
\end{eqnarray*}
 We can check that $f$ is a $C^1$ function, and that $u$ is a weak solution of Poisson's equation iff $u$ is a critical point of $f$. 
 
Because of their importance in many aspects of science and life, there have been a lot of research on finding solutions of PDE. We concentrate here the approach of using numerical methods to solve PDE. There is a comprehensive list of such methods in \cite{numPDE}, such as Finite difference method, Method of lines, Finite element method, Gradient discretization method and so on. It seems, however, that numerical methods for finding critical points, which are powerful tools in finite dimensional spaces, are so far not much used. One reason may well be that numerical methods for finding critical points themselves are not much developed in infinite dimension. Indeed, it seems that most of the (convergence) results in this direction so far is based on Banach's fixed point theorem for contracting mappings, and applies only to maps which have some types of monotonicity. For systematic development in GD, one can see for example the very recent work \cite{geiersbach-scarinci}, where the problem is of stochastic nature, the space is a Hilbert space, and $f$ is either in $C^{1,1}_L$ (in which case only the results $\lim _{n\rightarrow\infty}||x_{n+1}-x_n||=0$ is proven); or when with additional assumption on learning rates $\sum _n\delta _n=\infty$ and $\sum _{n}\delta _n^2<\infty$, where convergence to $0$ of $\{\nabla f(x_n)\}$ can be proven. Some other references are \cite{gallego-etal} (where the function is assumed to be strongly convex) and \cite{blank-rupprecht} (where the VMPT method is used), where again only $\lim _{n\rightarrow\infty}||\nabla f(z_n)||=0$ is considered, and no discussion of (weak or strong) convergence of $\{x_n\}$ itself or avoidance of saddle points are given.   

In this paper, we will develop more the Backtracking GD method in infinite dimensional Banach spaces. To motivate the assumptions which will be imposed later on the functions $f$, we present here some further pathologies one faces in infinite dimensions. The first pathology is that for a continuous function $f:X\rightarrow \mathbb{R}$, it may happen that $f$ is unbounded on bounded subsets of $X$. The second pathology is that it may happen that $\{x_n\}$ w-converges to $x$, but $\nabla f(x_n)$ does not w-converge to $\nabla f(x)$. One needs to avoid this pathology, at least in the case when additionally $\lim _{n\rightarrow\infty}||\nabla f(x_n)||=0$, since in general the sequence $\{x_n\}$ one constructs from a GD method is not guaranteed to (strongly) converge, and hence at most can be hoped to w-converge to some $x$. If it turns out  that $\nabla f(x)\not= 0$, one cannot use GD method to find critical points. The second pathology is prevented by the following condition: 

{\bf Condition C.} Let $X$ be a Banach space. A $C^1$ function $f:X\rightarrow \mathbb{R}$ satisfies Condition C if whenever $\{x_n\}$ weakly converges to $x$ and $\lim _{n\rightarrow\infty}||\nabla f(x_n)||=0$, then $\nabla f(x)=0$.   

From a purely optimisation interest, Condition C is satisfied by quadratic functions, such as those considered on page 63 in \cite{nesterov} where Nesterov constructs some examples to illustrate the pathology with using GD in infinite dimensions. Indeed, if $f\in C^1$ is quadratic, then there is $A:X\rightarrow X^*$ a bounded linear operator and $b\in X^*$, so that $\nabla f(x)=Ax+b$. It is known that $A$ is then continuous in the weak topology. Therefore, if $\{x_n\}$ w-converges to $x$, then $\{Ax_n+b\}$ w-converges to $Ax+b$. In particular, $0=\liminf _{n\rightarrow\infty}||\nabla f(x_n)||\geq ||\nabla f(x)||$. Thus we have $\nabla f(x)=0$, and hence Condition C is satisfied.  

More interestingly, Condition C is valid for functions $f\in C^1$ which are convex. Indeed, assume that $f$ is in $C^1$ and is convex, and that $\{x_n\}$ w-converges to $x$ so that $\lim _{n\rightarrow\infty}||\nabla f(x_n)||=0$. Then since $f$ is continuous, it follows that $f$ is weakly lower semicontinuous (see Proposition 7b in \cite{browder}), that is $\liminf _{n\rightarrow\infty}f(x_n)\geq f(x)$. Now, since $f$ is convex and $\lim _{n\rightarrow\infty}||\nabla f(x_n)||=0$, for every $y\in X$ we have
\begin{eqnarray*}
f(y)\geq \limsup _{n\rightarrow\infty} [f(x_n)+<\nabla f(x_n),x_n-x>]=\limsup _{n\rightarrow\infty} f(x_n)\geq f(x).
\end{eqnarray*}
Therefore, $x$ is a  (global) minimum of $f$ and hence $\nabla f(x)=0$. 

It can be checked that the function associated to Poisson's equation satisfies Condition C (for example, because it is quadratic). More generally, the following class of non-linear PDE, see Definition 2 in \cite{browder}, satisfies Condition C:

{\bf Class $(S)_+$.} A $C^1$ function $f:X\rightarrow \mathbb{R}$ is of class $(S)_+$ if whenever $\{x_n\}$ w-converges to $x$ and $\limsup _{n\rightarrow\infty}<\nabla f(x_n),x_n-x>\leq 0$, then $\{x_n\}$ (strongly) converges to $x$. 

Indeed, assume that $f$ is in Class $(S)_+$, and $\{x_n\}$ w-converges to $x$ so that $\lim _{n\rightarrow\infty}$ $||\nabla f(x_n)||$ $=0$. Then, since $<\nabla f(x_n),x_n-x>\leq ||\nabla f(x_n)||\times ||x_n-x||$ and $\{||x_n-x||\}$ is bounded,  we have $\limsup _{n\rightarrow\infty}<\nabla f(x_n),x_n-x>\leq 0$. Therefore, by definition of Class $(S)_+$, we have $\{x_n\}$ (strongly) converges to $x$. Then, $\nabla f(x)=\lim _{n\rightarrow\infty}\nabla f(x_n)=0$. Hence Condition C is satisfied. ({\bf Remark.} Here we obtain the same conclusion under the weaker condition that $\nabla f$ is only demi-continuous. In this case, by definition, since $x_n$ strongly converges to $x$, we have that $\nabla f(x_n)$ weakly converges to $\nabla f(x)$. It follows that $||\nabla f(x)||\leq \liminf _{n\rightarrow\infty}||\nabla f(x_n)||$. Hence, if we have $\lim _{n\rightarrow\infty}||\nabla f(x_n)||=0$, it follows also that $\nabla f(x)=0$.)     

The interest in Class $(S)_+$ is that one can define topological degree for functions in this class, and hence can use to obtain lower bounds on the number of critical points. Moreover, Class $(S)_+$ includes many common PDE, such as Leray-Schauder where $Id-\nabla f$ is compact. 

Now we state one concise version of our main results. We assume that there is given a canonical isomorphism between $X$ and its dual $X^*$, for example when $X$ is a Hilbert space (see Proposition 8 in \cite{browder}, discussed in comments below, for how to deal with in the case $X$ is not Hilbert).  In the comments afterwards, we will discuss about extensions which are more complicated to state.

\begin{theorem} Let $X$ be a reflexive Banach space and $f:X\rightarrow \mathbb{R}$ be a $C^2$ function which satisfies Condition C. Moreover, we assume that for every bounded set $S\subset X$, then $\sup _{x\in S}||\nabla ^2f(x)||<\infty$. We choose a point $x_0\in X$ and construct by the Local Backtracking GD procedure (in the infinite dimensional setting) the sequence $x_{n+1}=x_n-\delta (x_n)\nabla f(x_n)$. Then we have: 

1) Every cluster point of $\{x_n\}$, in the {\bf weak} topology, is a critical point of $f$. 

2) Either $\lim _{n\rightarrow\infty}f(x_n)=-\infty$ or $\lim _{n\rightarrow\infty}||x_{n+1}-x_n||=0$. 

3) Here we work with the weak topology. Let $\mathcal{C}$ be the set of critical points of $f$. Assume that $\mathcal{C}$ has a bounded component $A$. Let $\mathcal{B}$ be the set of cluster points of $\{x_n\}$. If $\mathcal{B}\cap A\not= \emptyset$, then $\mathcal{B}\subset A$ and $\mathcal{B}$ is connected.     

4) Assume that $f$ has at most countably many saddle points. Then for generic choices of $\alpha ,\beta ,\delta _0$ and the initial point $x_0$, if the sequence $\{x_n\}$ converges - in the {\bf weak} topology, then the limit point cannot be a saddle point. 

5) Assume that $X$ is separable. Then for generic choices of $\alpha ,\beta ,\delta _0$ and the initial point $x_0$, if the sequence $\{x_n\}$ converges - in the {\bf weak} topology, then the limit point cannot be a saddle point. 
\label{TheoremMain}\end{theorem}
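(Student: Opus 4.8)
The plan is to make everything rest on the descent inequality that Backtracking GD is engineered to produce. With the choices $r(x)=1+||x||$ and $L(x)=\sup_{z\in B(x,r(x))}||\nabla^2 f(z)||$, the second defining condition $\delta||\nabla f(x)||<r(x)$ guarantees that the segment from $x_n$ to $x_{n+1}=x_n-\delta(x_n)\nabla f(x_n)$ stays inside $B(x_n,r(x_n))$, on which $\nabla f$ is $L(x_n)$-Lipschitz, and the first condition $\delta<\alpha/L(x_n)$ feeds into the standard descent lemma to give
\[
f(x_{n+1})\le f(x_n)-\left(1-\tfrac{\alpha}{2}\right)\delta(x_n)\,||\nabla f(x_n)||^2 .
\]
Since $0<\alpha<1$ the coefficient is positive, so $\{f(x_n)\}$ is non-increasing; if it does not tend to $-\infty$ it is bounded below and summation yields $\sum_n\delta(x_n)||\nabla f(x_n)||^2<\infty$, hence $\delta(x_n)||\nabla f(x_n)||^2\to 0$. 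Because $\delta(x_n)\le\delta_0$, writing $\delta(x_n)||\nabla f(x_n)||=\sqrt{\delta(x_n)}\cdot\sqrt{\delta(x_n)||\nabla f(x_n)||^2}\le\sqrt{\delta_0}\cdot\sqrt{\delta(x_n)||\nabla f(x_n)||^2}$ forces $||x_{n+1}-x_n||=\delta(x_n)||\nabla f(x_n)||\to 0$, which is part 2.

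For part 1, let $x_{n_k}\rightharpoonup x$. A weakly convergent sequence is bounded, so the $x_{n_k}$ lie in a ball $B(0,M)$, and on $B(0,1+2M)$ the hypothesis $\sup||\nabla^2 f||<\infty$ makes $\nabla f$ Lipschitz, hence bounded by some $G$, and makes $L(x_{n_k})$ uniformly bounded by some $L_M$. Since $r(x_{n_k})\ge 1$ and $||\nabla f(x_{n_k})||\le G$, the second backtracking condition is met by all small $\delta$, so only the first is binding and $\delta(x_{n_k})$ is bounded below by a positive constant $\delta_{\min}$ depending only on $M$. Combined with $\delta(x_{n_k})||\nabla f(x_{n_k})||^2\to 0$ this gives $||\nabla f(x_{n_k})||\to 0$, whence Condition C yields $\nabla f(x)=0$. (A weak cluster point forces the $x_{n_k}$ into a bounded set, so $f(x_{n_k})$ is bounded below and the summability above is indeed available.)

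For part 3, first note $\mathcal{B}\subset\mathcal{C}$ by part 1, and that Condition C makes $\mathcal{C}$ weakly sequentially closed, so the bounded component $A$ is weakly closed and, by reflexivity, weakly compact. The plan is to transport the classical "$||x_{n+1}-x_n||\to 0$ implies connected limit set" argument to the weak topology. Restricting to a large weakly compact ball (weakly metrizable after passing to the closed separable subspace generated by the sequence), I would use that a component of the compact set $\mathcal{C}$ equals its quasi-component, so $A$ is separated from $\mathcal{C}\setminus A$ by a weakly clopen set; since some cluster point lies in $A$, the sequence enters a neighbourhood of $A$ infinitely often, and if a cluster point lay outside $A$ the $o(1)$ steps would force accumulation in the gap between the two closed pieces, producing a cluster point off $\mathcal{C}$ and contradicting $\mathcal{B}\subset\mathcal{C}$. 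This both confines the tail of the sequence to a bounded neighbourhood of $A$, giving $\mathcal{B}\subset A$, and yields connectedness of $\mathcal{B}$ by the same $o(1)$-step reasoning.

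Parts 4 and 5 are the heart and the \textbf{main obstacle}. The strategy follows the stable-manifold approach of \cite{lee-simchowitz-jordan-recht, panageas-piliouras, truong}, upgraded from Lebesgue-null to shy (Haar-null) exceptional sets. Near a critical point $z$ one has $\nabla f\approx 0$, so only the first backtracking condition binds and $\delta(x)\equiv\beta^m\delta_0$ is locally constant; for generic $\alpha,\beta,\delta_0$ the number $\alpha/L(z)$ is not a power $\beta^n\delta_0$, so this local constancy is genuine and the update is the honestly $C^1$ map $T(x)=x-\delta_z\nabla f(x)$ with $DT(z)=I-\delta_z\nabla^2 f(z)$. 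If $z$ is a generalised saddle, $\nabla^2 f(z)$ has a negative eigenvalue $\lambda$, so $DT(z)$ has spectral value $1-\delta_z\lambda>1$, i.e. a nontrivial expanding direction; generic hyperparameters also keep $DT(z)$ invertible with spectrum off the unit circle, so $T$ is a local $C^1$ diffeomorphism. The center-stable manifold theorem then traps the local set of initial points converging to $z$ inside a positive-codimension submanifold, which is shy; pulling back along the local diffeomorphism and taking a countable union over the saddle points (part 4: countably many by hypothesis; part 5: by separability and the Lindel\"of property, a countable cover of the saddle set) keeps the total exceptional set shy, since countable unions of shy sets are shy. Hence for $x_0$ outside this shy set the weak limit cannot be a saddle. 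I expect the genuine difficulty to be twofold: (i) establishing the infinite-dimensional center-stable manifold theorem for the piecewise-defined, hyperparameter-dependent map $T$ and checking it is a local diffeomorphism only after the generic perturbation; and (ii) the measure-theoretic bookkeeping in infinite dimensions — that a positive-codimension $C^1$ submanifold is shy, that shyness survives the maps $T$ and countable unions, and that separability reduces uncountably many saddles to a countable cover.
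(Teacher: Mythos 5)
Your parts 1) and 2) follow essentially the paper's own route (the descent inequality coming from the two backtracking conditions, summability of $\delta(x_n)\|\nabla f(x_n)\|^2$, the lower bound $\inf_K\hat{\delta}>0$ on bounded sets, and Condition C), and are fine. For part 3) you take a different tack: you want to run the connectedness argument directly in the weak topology of a weakly compact, metrizable ball, separating $A$ from $\mathcal{C}\setminus A$ by a weakly clopen set. The paper instead chooses a bounded finite-rank linear map $h:X\rightarrow\mathbb{R}^m$ with $h(A)\cap h(\mathcal{C}\setminus A)=\emptyset$ and pushes everything down to $\mathbb{R}^m$, where the cluster set of $\{h(x_n)\}$, compactified in $\mathbb{P}^m$, is connected by the classical $\|x_{n+1}-x_n\|\rightarrow 0$ argument. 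The point your sketch does not confront is that $\{x_n\}$ is not known to be bounded, so you cannot simply restrict to a weakly compact ball: the sequence could leave every bounded set and return, and the claimed ``accumulation in the gap'' needs a compactification, which is exactly what the projective space supplies in the paper's argument.

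The genuine gap is in parts 4) and 5), and you have in fact flagged it yourself. You propose to apply a center-stable manifold theorem to the full update map $T(x)=x-\delta_z\nabla f(x)$ on the infinite-dimensional space $X$, then argue that the resulting positive-codimension invariant manifold is shy and that shyness survives pullback by $T$ and countable unions. Two of these steps are unjustified as stated: (i) the infinite-dimensional center-stable manifold theorem requires a spectral splitting of $DT(z)=I-\delta_z\nabla^2 f(z)$ into parts strictly inside and strictly outside the closed unit disc; in infinite dimensions $\nabla^2 f(z)$ may have continuous spectrum, and ``has one negative eigenvalue'' does not by itself provide the needed splitting or invertibility; (ii) shyness is defined via translations and is not in general preserved under nonlinear $C^1$ diffeomorphisms of an infinite-dimensional space, so ``pulling back along the local diffeomorphism'' does not keep the exceptional set shy without further argument. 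The paper sidesteps both problems by a finite-dimensional reduction: for each generalised saddle $z$ it forms the two-dimensional subspace $W_z=\mathbb{R}z\oplus\mathbb{R}v(z)$ spanned by $z$ and a negative eigenvector $v(z)$, a bounded linear projection $pr_z:X\rightarrow W_z$, and the auxiliary planar system $H_z(x)=x-\hat{\delta}(x)pr_z(\nabla f(x))$ on $W_z$; the classical two-dimensional center-stable manifold theorem and Lebesgue measure on $W_z$ give a closed, nowhere dense, null exceptional set $\mathcal{E}_z\subset W_z$, and the exceptional set in $X$ is the linear preimage $pr_z^{-1}(\mathcal{E}_z)$, whose shyness is immediate by testing against Lebesgue measure on $W_z$. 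To salvage your direct approach you would need either additional spectral hypotheses on $\nabla^2 f$ at saddle points or the paper's reduction to a finite-dimensional slice.
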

 \begin{proof} By the assumption on the Hessian of $f$, we see that $f$ satisfies the conditions needed to apply the Local Backtracking GD procedure, where one define $r(x)=1+||x||$ and $L(x)=\sup _{z\in B(x,r(x))}||\nabla ^2f(z)||$. Let $\{x_n\}$ be a sequence constructed by  the Local Backtracking GD procedure.
 
 1) By the usual estimates in finite dimensional Backtracking GD,  that whenever $K\subset X$ is a bounded set then $\inf _{x\in K}\hat{\delta}(x)>0$, we have that if $\{x_{n_j}\}$ w-converges to $x$, then $\lim _{j\rightarrow\infty}||\nabla f(x_{n_j})||=0$. Then by Condition C, we have that $\nabla f(x)=0$, as wanted. 
 
 2) This follows as in \cite{truong-nguyen}, by using that $\hat{\delta}(x_n)$ is bounded from above. (One can also use ideas in \cite{truong2} where $\hat{\delta}(x_n)$ is allowed to be unbounded.)
 
3) By  Condition C, it follows that $\mathcal{B}\subset \mathcal{C}$ and that $\mathcal{C}$ is w-closed.  We let $X_0\subset X$ be the closure of the vector space generated by $\{x_n\}$. Then $X_0$ is also reflexive, and moreover, it is separable. Moreover, $\mathcal{B}\subset X_0$, and hence $\mathcal{B}\subset \mathcal{C}\cap X_0$. Therefore, from now on, we can substitute $X$ by $X_0$, and substitute $\mathcal{C}$ by $\mathcal{C}\cap X_0$.  We define $\mathcal{C}'=\mathcal{C}\backslash A$. 

We define $\mathcal{B}_0=\mathcal{B}\cap A$, which is a non-empty w-closed set. We will show first that if $z\in \mathcal{B}$ then $z\in \mathcal{B}_0$. We note that $A,\mathcal{B}_0$ are compact in w-topology, and $\mathcal{C}'$ is closed in w-topology, and $A\cap \mathcal{C}'=\emptyset$. Recall that for each $z\in X$, the w-topology has a basis for open neighbourhoods of $z$ the sets of the form $\{x:~||h(x)-h(z)||<\epsilon\}$, where $\epsilon >0$ is a constant and $h:X\rightarrow \mathbb{R}^m$ is a bounded linear map to a finite dimensional space $\mathbb{R}^m$. Therefore, we can find an $\epsilon >0$ and a bounded linear map $h:X\rightarrow \mathbb{R}^m$ so that: for all $x'\in \mathcal{C}'$ and $x\in A$, then $|h(x)-h(z)|>\epsilon$. In particular, $h(A)\cap h(\mathcal{C}')=\emptyset$. Hence, $h(A)$ is a bounded component of $h(\mathcal{C})$. 

By 2) we have that $\lim _{n\rightarrow\infty}||x_{n+1}-x_n||=0$, and hence the same is true for the sequence $\{h(x_n)\}\subset \mathbb{R}^m$. Therefore, by the arguments in \cite{truong-nguyen}, by using the real projective space $\mathbb{P}^m$, we have that the closure in $\mathbb{P}^m$ of the set of  cluster points of $\{h(x_n)\}$  is connected. Note also that the cluster points of $\{h(x_n)\}$ is exactly $h(\mathcal{B})$, is contained in $h(\mathcal{C})$ and has a non-empty intersection with $h(A)$. It follows that $h(\mathcal{B})$ must be contained in $h(A)$ and is connected. In particular, $\mathcal{B}\cap \mathcal{C}'=\emptyset$, as claimed. 

Now to finish the proof, we will show that $\mathcal{B}$ is itself connected.  This can in fact be done as above, by using one connected component of $\mathcal{B}$ in the place of $\mathcal{B}$. 

4) For each generalised saddle point $z$ of $f$, we choose a non-zero $v(z)$ which is an eigenvector of $\nabla ^2f(z)$ with a negative eigenvalue. Construct the $2$-dim vector subspace $W_z=\mathbb{R}z\oplus \mathbb{R}v(z)$ of $X$, and let $pr_z:~X\rightarrow W_z$ be a bounded linear projection which is available by Hahn-Banach theorem. 

We will assume that $\alpha,\beta $ and $\delta _0$ are chosen for which $\alpha /L(z)$ does not belong to the discrete set $\{\beta ^n\delta _0:~n=0,1,2,\ldots \}$ for every generalised saddle point of $\mathcal{B}$. Then we look at the dynamical system on $W_z$:
\begin{eqnarray*}
H_z(x)=x-\hat{\delta}(x)pr_z(\nabla f(x)):~W_z\rightarrow W_z.
\end{eqnarray*}
As in \cite{truong}, the above assumptions on $\alpha ,\beta$   and $\delta$ implies that $H_z$ is a local diffeomorphism near $z\in W_z$, and hence we have by Central-Stable manifold theorem (see Theorem III.7 in \cite{shub}), there is a local Central-Stable manifold (of dimension $<2$) in a small neighbourhood $U_z\subset W_z$ of $z$, such that if $x\in U_z$ and $H_z^n(x)\in U_z$ for all $n$, then $x$ is contained in the local Central-Stable manifold. Also, as in \cite{truong}, locally $H_z$ is one of a finite number of Lipschitz continuous maps, and hence there is an exceptional set $\mathcal{E}_z\subset W_z$ with the following properties: i) $\mathcal{E}_z$ is closed, ii) $\mathcal{E}_z$ is nowhere dense, and iii) if $x\in W_z\backslash \mathcal{E}_z$, then the sequence $\{H_z^n(x)\}$ {\bf does not} converge to $z$. We can in fact show also that $\mathcal{E}_z$ has Lebesgue measure $0$, and in the remarks after the proof of this theorem we will discuss stronger properties for the exceptional set $pr_{z}^{-1}(\mathcal{E}_z)\subset X$ in view of the definitions in \cite{hunt-etal}.  

Now assume that the sequence $\{x_n\}$, constructed by the Local Backtracking GD procedure, w-converges to  a generalised saddle point $z$ of $f$. Then (as points in $X$, and not just in $W_z$), $pr_z(x_n)$ strongly converges to $z$, and hence (because $f\in C^1$) $\nabla f(pr_z(x_n))$ strongly converges to $\nabla f(z)=0$. Therefore, since $\hat{\delta}(x)$ is bounded from above by $\delta _0$, we have $H_{z}(pr_z(x_n))$ converges to $z$. This means that $pr_z(x_0)\in \mathcal{E}_z$ and hence $x_0\in pr_z^{-1}(\mathcal{E}_z)$. Since $pr_z$ is a bounded linear projection, we have that $ pr_z^{-1}(\mathcal{E}_z)$  is also closed and nowhere dense. Thus we see that if $x_0$ is chosen outside the union of these $pr_z^{-1}(\mathcal{E}_z)$, where $z$ runs  all over generalised saddle points of $f$ - this union is also nowhere dense since we assumed that there are only at most countably many such $z$'s, then the sequence $\{x_n\}$ cannot w-converge to any generalised saddle point.  

5) For each generalised saddle point of $z$, we construct as in part 4 the space $W_z$ and the map $H_z$, together with the neighbourhood $U_z\subset W_z$ of $z$ where a local Central-Stable manifold exists, as well as a bounded linear projection $pr_z:~X\rightarrow H_z$. 

Since $X$ is reflexive and separable, it follows that $X^*$ is also separable. Then (see point iii) in \cite{dance-sims}) it follows that any bounded subset of $X$, in the weak topology, is metrizable. Also, since $X$ is separable, it follows that $X$ is hereditarily Lindel\"of. That is, any open cover of a subspace of $X$, in the weak topology, has a countable open subcover. Then we can find countably many generalised saddle points $\{z_i\}_{i=1,2,\ldots }$ such that $\{pr_{z_i}^{-1}(U_{z_i})\}_{i=1,2,\ldots }$ is an open subcover of the set of all generalised saddle points for the open cover $\{pr_z^{-1}(U_z)\}$ where z runs all over generalised saddle points. Then we can proceed as in part 4) with the points $\{z_i\}_{i=1,2,\ldots }$.  
\end{proof}

{\bf Remarks:}

1) For general Banach spaces,  we cannot regard $\nabla f(x_n))\in X^*$ as an element of $X$, and hence the update rule in the theorem is not legit. However,  in parts 1, 2 and 3 of the theorem, we can choose $x_{n+1}=x_n-\hat{\delta}(x_n)v(x_n)$ provided $v(x_n)\in X$ is chosen such that Armijo's condition is satisfied. To this end, we choose (by Hahn-Banach theorem), under the assumption that $X$ is reflexive, a point $v(x_n)\in X$ so that $<\nabla f(x_n),v(x_n)>=||\nabla f(x_n)||^2$ and $||v(x_n)||=||\nabla f(x_n)||$. Hence we can also work with Frechet differentiation as in \cite{blank-rupprecht}. Likewise, we can use other versions of Backtracking GD, under less restrictions on the function $f$.

Note that as of current, for parts 4 and 5 of the theorem we need to use Stable-Central manifold, and hence if the map $x\mapsto x-\hat{\delta}(x)v(x)$ is not $C^1$ near saddle points, then it is difficult to proceed, as of current. See however point 5 below for an idea on using duality mappings to deal with this in general. 

2) This theorem can be extended to several other versions of Backtracking GD, including Unbounded Backtracking GD. The conclusion of parts 4 and 5 of the theorem are also valid if we use the strong topology instead of weak topology. This is because we can again use Lindel\"of lemma and the fact that Stable-Central manifold theorem is also valid in the Banach setting (see Theorem III.8 in \cite{shub}). 

3) We note that while there is no analog of Lebesgue measures in infinite dimensions, there are good similar notions of "almost every" and "zero measure", which can be used to quantify a statement about a property which  is "generically" true. We recall here the notion of "shyness" in Banach spaces in \cite{hunt-etal}, and will show that the exceptional set for $x_0$ in parts 4 and 5 of Theorem \ref{TheoremMain} are "shy". 

Let $X$ be a Banach space. Then a Borel set $S\subset X$ is called "shy" if there exists a measure $\mu$ on $X$ with the following properties: i) There exists a compact set $U\subset X$ such that $0<\mu (U)<\infty$; ii) For every $x\in X$, then $\mu (x+S)=0$. Note that Fact 2' in \cite{hunt-etal} says that the complement of a "shy" set is dense. Note also that by Fact 6 in \cite{hunt-etal}, if $X$ has finite dimension, then $S\subset X$ is "shy" iff $S$ has Lebesgue measure $0$.  

Now we show that the sets $pr_z^{-1}(\mathcal{E}_z)$ appearing in the proofs of parts 4 and 5 of Theorem \ref{TheoremMain} are "shy". In fact, since $W_z$ is finite dimensional and $E_z\subset W_z$ has Lebesgue measure $0$. We choose $\mu _z$ to be the Lebesgue measure on $W_z$, and will show that for all $x\in X$, then $\mu _z(x+pr_z^{-1}(\mathcal{E}_z))=0$. To this end, since $\mu _z$ has support in $W_z$, it follows from the fact that $pr_z:X\rightarrow W_z$ is a bounded linear projection that 
\begin{eqnarray*}
\mu _z(x+pr_z^{-1}(\mathcal{E}_z))=\mu _z([x+pr_z^{-1}(\mathcal{E}_z)]\cap W_z)= \mu _z(pr_z(x)+\mathcal{E}_z)=\mu _z(\mathcal{E}_z)=0.
\end{eqnarray*}

By Fact 3" in \cite{hunt-etal}, we have that a countable union of $pr_z^{-1}(\mathcal{E}_z)$ is also "shy".  

4) In part 3) of Theorem \ref{TheoremMain}, if $f$ has at most countably many critical points (including Morse functions), then the conclusion is simple to state: Either $\lim _{n\rightarrow\infty}||x_n||=\infty$ or $\{x_n\}$ w-converges to a critical point of $f$. 

5) Duality mapping: 

If we are willing to redefine the norms on $X$, then $v(x_n)$ (in point 1 above) is unique and depends continuously on $x_n$. The argument presented here follows Proposition 8 in \cite{browder}. Assume that $X$ is reflexive. Then we can renorm both $X$ and $X^*$ so that both of them are locally uniformly convex. Then, we have a well-defined map $J:X^*\rightarrow X$ with the following properties: $<y,J(y)>=||y||^2$ and $||J(y)||=||y||$ for all $y\in X^*$. This map is called normalized duality mapping, and it is bicontinuous. Moreover, it is monotone, that is $<y_1-y_2,J(y_1)-J(y_1)>\geq 0$, for all $y_1,y_2\in X^*$. Indeed, 
\begin{eqnarray*}
<y_1-y_2,J(y_1)-J(y_1)>&=& [(||y_1||-||y_2||)^2] + [||y_1||\times ||J(y_2)||-<y_1,J(y_2)>] \\
&+& [||y_2||\times ||J(y_1)||-<y_2,J(y_1)>],
\end{eqnarray*}
and each square bracket is non-negative. Hence, the map $J$ is in Class $(S)_+$. 

Our GD update rule is now: $x\mapsto x-\delta (x)J(\nabla f(x))$, which is continuous. Even though it is not $C^1$ in general, it seems good enough that conclusions of  parts 4 and 5 of Theorem \ref{TheoremMain} should follow. For example, if $X^*$ is the space $l^p$ ($1<p<\infty$), i.e. the space of sequences $x=(x^{(1)},x^{(2)},\ldots )$ so that $\sum _{j}|x^{(j)}|^p<\infty$, with norm $||x||=(\sum _j|x^{(j)}|^p)^{1/p}$, then the normalized duality mapping is: 
\begin{eqnarray*}
J(x)=\frac{1}{||x||^{p-2}}(|x^{(1)}|^{p-1}sign (x^{(1)}),|x^{(2)}|^{p-1}sign (x^{(2)}),\ldots )
\end{eqnarray*}
Assume that $p\geq 2$. The map $J(x)$ is $C^1$ outside of the hyperplanes $\{x^{(j)}=0\}$, and its derivative is locally bounded near each points of these hyperplanes. (To see this, the readers can simply work out the differentiability of the following map in $2$ variables $|s|^{p-2}s/(|s|^p+|t|^p)^{(p-2)/p}$.)

\end{document}